\newlist{steps}{enumerate}{1}
\setlist[steps, 1]{label = Step \arabic*:}
\newcommand{\gd}{\delta}
\newcommand{\inpt}[1]{\langle #1 \rangle}
\newcommand{\ga}{\gamma}
\newcommand{\G}{\Gamma}
\newcommand{\gl}{\lambda}
\newcommand{\pdr}{\partial}
\newcommand{\beq}{\begin{equation}}
\newcommand{\eeq}{\end{equation}}
\newcommand{\bea}{\begin{align}}
\newcommand{\eea}{\end{align}}
\newcommand{\bthm}{\begin{theorem}}
\newcommand{\ethm}{\end{theorem}}
\newcommand{\bpr}{\begin{proof}}
\newcommand{\epr}{\end{proof}}
\newcommand{\bcl}{\begin{corollary}}
\newcommand{\ecl}{\end{corollary}}
\newcommand{\bpn}{\begin{proposition}}
\newcommand{\epn}{\end{proposition}}
\newcommand{\bre}{\begin{remark}}
\newcommand{\ere}{\end{remark}}
\newcommand{\bdf}{\begin{definition}}
\newcommand{\edf}{\end{definition}}
\newcommand{\bss}{\begin{align*}}
\newcommand{\ess}{\end{align*}}
\newcommand{\bl}{\label}
\newcommand{\mR}{\mathbb{R}}
\newcommand{\mZ}{\mathbb{Z}}
\newcommand{\ik}{{i k}}
\newcommand{\smn}{\sum_{i=1}^m \sum_{k=1}^n}
\newcommand{\zmn}{\mathbb{Z}_{mn}}
\newtheorem{theorem}{Theorem}[section]
\newtheorem{corollary}[theorem]{Corollary}
\newtheorem{proposition}[theorem]{Proposition}
\theoremstyle{definition}
\newtheorem{definition}[theorem]{Definition}
\theoremstyle{remark}
\newtheorem{remark}{Remark}
\numberwithin{equation}{section}
\begin{document}

\title[Synchronization of 2D Cellular Neural Networks]{Exponential Synchronization of 2D Cellular Neural Networks with Boundary Feedback}

\author[L. Skrzypek]{Leslaw Skrzypek}
\address{L. Skrzypek, Department of Mathematics and Statistics, University of South Florida, Tampa, FL 33620, USA}
\email{skrzypek@usf.edu}
\thanks{}

\author[C. Phan]{Chi Phan}
\address{C. Phan, Department of Mathematics and Statistics, Sam Houston State University, Huntsville, TX 77340, USA}
\email{chp007@shsu.edu}
\thanks{}

\author[Y. You]{Yuncheng You}
\address{Y. You (Emeritus),  Department of Mathematics and Statistics, University of South Florida, Tampa, FL 33620, USA}
\email{you@mail.usf.edu}
\thanks{}

\subjclass[2010]{34A33, 34D06, 37B15, 37L60, 92B20}

\date{}


\keywords{Cellular neural networks, lattice FitzHugh-Nagumo equations, exponential synchronization, boundary feedback, dissipative dynamics}

\begin{abstract} 
In this work we propose a new model of 2D cellular neural networks (CNN) in terms of the lattice FitzHugh-Nagumo equations with boundary feedback and prove a threshold condition for the exponential synchronization of the entire neural network through the \emph{a priori} uniform estimates of solutions and the analysis of dissipative dynamics. The threshold to be satisfied by the gap signals between pairwise boundary cells of the network is expressed by the structural parameters and adjustable. The new result and method of this paper can also be generalized to 3D and higher dimensional FitzHugh-Nagumo type or Hindmarsh-Rose type cellular neural networks.
\end{abstract}

\maketitle

\section{\textbf{Introduction}}

For the collective dynamic phenomena of many network systems that attract scientific research interests, two of the most ubiquitous concepts for the relevant mathematical models presented by various differential equations are synchronization and pattern formation \cite{AA, A, Chow2, Chua2, I, PC, PRK}. The mechanisms of synchronization and control depend on the spatiotemporal structures of the designed models. 

In this work we shall focus on the two-dimensional cellular neural networks (briefly CNN) modeled by the 2D lattice FitzHugh-Nagumo equations with the boundary feedback and prove the exponential synchronization when the adjustable and explicit threshold condition is satisfied by the pairwise boundary gap signals. 

As well known, CNN was invented by Chua and Yang \cite{Chua1, Chua2} in 1988. CNN physically consists of network-like interacted analog or digital signal processors such as VLSI. Mathematically, CNN is defined \cite{Chua3, CTR} to be a 2D, 3D or higher dimensional array of identical template dynamical systems (called cells), which satisfies two properties that the interactions (called the synaptic laws) are local within a neighborhood of finite radius $r > 0$ and the state variables are all time-continuous signals. In a large sense the dynamics of CNN can be studied as a lattice dynamical system generated by lattice differential equations in time \cite{Chow1, Chow2, Chua3, CR, S2}. 

The diversified cellular neural networks as well as the Hopfield neural networks have found effective applications in many areas such as computational image processing, medical visualization, data driven optimization, pattern recognition, associative memory, and secure communications, cf. \cite{Chow2, Chua2, CR, GJH, L, PC, SG, S1, S2, Wu}. 

In the expanding front of deep learning and artificial intelligence in general, the theory of cellular neural networks, convolutional neural networks, and variants of complex neural networks is closely linked to discrete nonlinear partial differential equations and delay differential equations \cite{Chow1, CR, GJH, S2}. 

This work aims to prove the exponential synchronization of the 2D FitzHugh-Nagumo cellular neural networks with the new feature of the boundary feedback, which is a substantial generalization of feedback synchronization for one-dimensional FitzHugh-Nagumo cellular neural networks \cite{LY2} shown by the authors. In this CNN model, the cell template with the synapsis is described by the discrete version of the 2D partly diffusive FitzHugh-Nagumo equations with boundary feedback control.

We consider a cellular neural network of the 2D grid-structure, which consists of the cells $\{N(i, k)\}$ located at the grid points $\{(ih_x, kh_y): i = 1, 2, \cdots , m \;\text{and} \; k = 1, 2, \cdots, n\}$ for given $h_x, h_y > 0$ along the $x$-row direction and the $y$-column direction, respectively. We shall study synchronization problem of the following 2D lattice FitzHugh-Nagumo equations: 
\beq \bl{CN}	
	\begin{split}
	\frac{dx_\ik}{dt} &= a [(x_{i-1, k} - 2x_\ik + x_{i+1,k}) + (x_{i, k-1} - 2x_\ik + x_{i, k+1})]  \\
	&\quad + f(x_\ik) - b\, y_\ik + p u_\ik, \\
	\frac{dy_\ik}{dt} &= c\, x_\ik - \gd \, y_\ik, 
	\end{split}
\eeq
where $1 \leq i \leq m, \; 1 \leq k \leq n, \; t > 0$, the integers $m, n \geq 4$, and the two-dimensional discrete Laplacian operator \cite{Chow1, Chua1, CR, S2}
\begin{align*}
	D_\ik (x) &= a  [(x_{i-1, k} - 2x_\ik + x_{i+1,k}) + (x_{i, k-1} - 2x_\ik + x_{i, k+1})]   \\
	&= a (u_{i-1,k} + u_{i+1, k} + u_{i, k-1} + u_{i, k+1} - 4 u_\ik)
\end{align*}
shows the synaptic law of cell coupling. The nonlinear function $f(\cdot )$ will be specified below. In this 2D model of CNN, we consider the periodic boundary condition:
\beq \bl{pbc}
	\begin{split}
	x_{0, k} (t) &= x_{m, k} (t), \quad x_{m+1, k}(t) = x_{1, k} (t), \quad \text{for} \;\, 1 \leq k \leq n,   \\
	x_{i,  0} (t) &= x_{i, n} (t), \, \; \quad x_{i, n+1}(t) = x_{i, 1} (t), \;\, \quad \text{for} \;\, 1 \leq i \leq m, 
	\end{split}
\eeq
and the \emph{boundary feedback} control $\{u_{i\, k}: 1 \leq i \leq m, 1 \leq k \leq n\}$: For $t \geq 0$,
\beq \bl{bfc1}
	\begin{split}
	&u_{1, k} (t) = u_{m+1, k} (t) = x_{m, k} (t) - x_{1, k}(t),   \quad \text{for} \; 1 \leq k \leq n,   \\
	&u_{i, k} (t) = 0, \quad 2 \leq i \leq m - 1,  \quad \text{for} \;\; 1 \leq k \leq n,\\
	&u_{m, k} (t) = u_{0, k} (t) = x_{1, k} (t) - x_{m, k} (t),  \quad \text{for} \;\; 1 \leq k \leq n,
	\end{split}
\eeq
and 
\beq \bl{bfc2}
	\begin{split}
	&u_{i, 1} (t) = u_{i, n+1} (t) = x_{i, n} (t) - x_{i, 1}(t),  \quad  \text{for} \; 1 \leq i \leq m,   \\
	&u_{i, k} (t) = 0, \quad 2 \leq i \leq n - 1,  \quad \text{for} \;\; 1 \leq i \leq m,\\
	&u_{i, n} (t) = u_{i, 0} (t) = x_{i, 1} (t) - x_{i, n} (t), \quad \text{for} \;\; 1 \leq i \leq m.
	\end{split}
\eeq
All the parameters $a, b, c, \gd, p$ can be any given positive constants, and $p > 0$ is the adjustable coefficient of the boundary feedback signals. Here $x_{m, k} (t) - x_{1, k}(t)$ measures the boundary gap signal between the two boundary node cells on the same row of the cellular neural network and $x_{i, n} (t) - x_{i, 1} (t)$ measures the boundary gap signal between the two boundary node cells on the same column of the cellular neural network. The initial conditions for the system \eqref{CN} are denoted by
\beq \bl{inc}
	x_\ik (0) = x_\ik^0 \in \mR \quad \text{and} \quad y_\ik (0) = y_\ik^0 \in \mR, \quad 1 \leq i \leq m, \;\; 1 \leq k \leq n.
\eeq

We make the following Assumption: The scalar function $f \in C^1 (\mR, \mR)$ satisfies 
\beq \bl{Asp}
	\begin{split}
	&f(s) s \leq  - \gl s^4 + \beta, \quad s \in \mathbb{R}, \\
	& f^{\,\prime} (s) \leq \ga, \quad s \in \mathbb{R}, \\
	\end{split}
\eeq
where $\gl, \beta$ and $\ga$ can be any given positive constants. Note that the nonlinear term in the original FitzHugh-Nagumo ordinary differential equations \cite{FH} is
$$
	f(s) = s(s-\alpha)(1 - s)
$$ 
and the constant $0 < \alpha < 1$. It satisfies the Assumption \eqref{Asp}:
\begin{align*}
	f(s)s &= - \alpha s^2 + (\alpha + 1)s^3- s^4 \leq -\alpha s^2 + \left(\frac{1}{2}s^4 + 2^3 (\alpha +1)^4\right) - s^4 \\
	&\leq - \left(\alpha s^2 + \frac{1}{2}s^4 \right) + 8(\alpha + 1)^4 \leq - \frac{1}{2} s^4 + 8(\alpha + 1)^4,  \\[3pt]
        f^{\, \prime} (s) &= - \alpha + 2(\alpha +1)s - 3s^2 \leq - \alpha + (\alpha +1)^2 - 2s^2 \leq 1 + \alpha + \alpha^2.
\end{align*}

Synchronization and its control play a significant role for biological neural networks and for the artificial neural networks as well \cite{A, I, WC}. Fast and effective synchronization may lead to enhanced functionality and performance of complex neural networks. 

For biological or artificial neural networks, synchronization topics have been studied with several mathematical models, including the FitzHuigh-Nagumo neural networks, typically with the synaptic coupling by clamped gap junctions \cite{AA, A, IJ, Yong} and the mean field couplings \cite{QT, WLZ}. For chaotic and stochastic neural networks with various applications, the pinning control is usually exploited \cite{L, PC, SG, WZML, ZCW}. Exponential synchronization of neural networks with or without time delays has also been studied in \cite{BW, CLH, GZG, YHJ}.

The methods commonly used in the reported researches on stability and synchronization of CNN and complex neural networks are mainly based on the analysis of eigenvalues for the coupled matrices, the linear matrix inequalities \cite{CR, FBA, GM, LPK, PC}, and the Lyapunov functionals \cite{CLH, I, S1}, with many references therein.                                                          

Recently the authors proved results on the exponential synchronization for the boundary coupled Hindmarsh-Rose neuron networks in \cite{PLY, PY}, the boundary coupled partly diffusive FitzHugh-Nagumo neural networks in \cite{LY1}, and the feedback synchronization of the one-dimensional FitzHugh-Nagumo CNN in \cite{LY2}.

The feature of this work is to present a new model of 2D FitzHugh-Nagumo CNN with the computationally favorable boundary feedback and to prove a sufficient condition for realization of its exponential synchronization. Moreover, this work is characterized by a new mathematical approach of dynamical \emph{a priori} estimates to show the existence of absorbing set for the solution semiflow of this CNN, which leads to the main result on the threshold condition for the exponential synchronization. The threshold is explicitly expressed in terms of the neural network parameters and can be adjusted by the strength coefficient $p$ of the boundary feedback in applications. 

\section{\textbf{Absorbing Set and Dissipative Dynamics}}

Define the following Hilbert space:
$$
	H = \ell^2 (\mZ_{mn}, \mR^{2mn}) = \{ (x, y) = ((x_\ik, y_\ik): 1 \leq i \leq m, 1 \leq k \leq n) \} 
$$
where $\mZ_{m n} = \{1 ,2, \cdots, m\} \times \{1, 2, \cdots , n\}$ and $m, n \geq 4$. The norm in $H$ is denoted and define by $\|(x, y)\|^2 =  \sum_{i=1}^m \sum_{k=1}^n (| x_\ik |^2 + |y_\ik |^2)$. The inner-product of $H$ or $\mathbb{R}^d$ is denoted by $\inpt{\,\cdot , \cdot\,}$. The space of all continuous and bounded functions of time $t \geq 0$ valued in $H$ is denoted by $C([0, \infty), H)$, which is a Banach space with the sup-norm. 

Since there exists a unique local solution in time of the initial value problem \eqref{CN}-\eqref{inc} under the Assumption \eqref{Asp} because the right-hand side functions in \eqref{CN} are locally Lipschitz continuous, we shall first prove the global existence in time of all the solutions in the space $H$. Then by the uniform estimates we show the dissipative dynamics of the solution semiflow in terms of the existence of an absorbing set.

\begin{theorem} \label{Tm}
Under the setting in Section \textup{1}, for any given initial state $((x_\ik^0, y_\ik^0): (i, k) \in \zmn) \in H$, there exists a unique solution 
$$
	((x_\ik (t, x_\ik^0), y_\ik (t, y_\ik^0)): (i, k) \in \zmn, \, t \geq 0 ) \in C([0, \infty), H) 
$$ 
of the initial-boundary value problem \eqref{CN}-\eqref{inc} for this 2D FitzHugh-Nagumo cellular neural network. 
\end{theorem}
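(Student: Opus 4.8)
The plan is to obtain the solution in three stages: local existence and uniqueness, a uniform \emph{a priori} bound in $H$, and continuation to a global bounded solution. Since \eqref{CN} with \eqref{pbc}--\eqref{bfc2} is a finite-dimensional system of $2mn$ ordinary differential equations whose right-hand side is $C^1$ in the state variables (the discrete Laplacian, the linear terms, the boundary feedback defined by \eqref{bfc1}--\eqref{bfc2}, and $f$ are each $C^1$), the right-hand side is locally Lipschitz on $H$. Hence the Picard--Lindel\"of theorem yields a unique maximal solution on some interval $[0, T_{\max})$. To upgrade this to a solution lying in $C([0,\infty), H)$ it suffices to show that the $H$-norm of the solution stays bounded; the dissipative structure will in fact furnish a bound uniform in $t \geq 0$, which both rules out finite-time blow-up and gives the claimed boundedness.

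For the a priori estimate I would use a weighted energy functional that symmetrizes the asymmetric linear coupling between the $x$- and $y$-variables. Taking the inner product of the first equation in \eqref{CN} with $x_\ik$ and of the second with $\frac{b}{c}\, y_\ik$, then summing over $(i,k) \in \zmn$, the cross terms $-b\, x_\ik y_\ik$ and $+b\, x_\ik y_\ik$ cancel, leaving the favorable dissipation $-\frac{b\gd}{c} \smn y_\ik^2$. For the discrete Laplacian, summation by parts together with the periodic boundary condition \eqref{pbc} shows the coupling term is nonpositive, so it is dissipative. The nonlinear contribution is controlled by the first line of the Assumption \eqref{Asp},
\bss
\smn f(x_\ik)\, x_\ik \leq -\gl \smn x_\ik^4 + mn\,\gb ,
\ess
which provides the crucial quartic dissipation needed to dominate any residual quadratic terms via $s^2 \leq \varepsilon s^4 + C_\varepsilon$.

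The boundary feedback term $p \smn u_\ik x_\ik$ is the only genuinely new ingredient. Because $u_\ik$ vanishes except on the boundary rows and columns, its row contribution telescopes, using \eqref{bfc1}, to
\bss
p \sum_{k=1}^{n} \big[ u_{1,k} x_{1,k} + u_{m,k} x_{m,k} \big]
&= p \sum_{k=1}^{n} (x_{m,k} - x_{1,k})(x_{1,k} - x_{m,k}) \\
&= -\,p \sum_{k=1}^{n} (x_{m,k} - x_{1,k})^2 \leq 0,
\ess
and symmetrically the column contribution from \eqref{bfc2} equals $-\,p \sum_{i=1}^{m} (x_{i,n} - x_{i,1})^2 \leq 0$. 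Thus the boundary feedback is itself dissipative and may simply be discarded in the upper bound; even had it not been sign-definite, each such term is bounded by a multiple of $\smn x_\ik^2$ and would be absorbed into $-\gl \smn x_\ik^4$ by Young's inequality.

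Assembling these observations yields a differential inequality of the form $\frac{d}{dt} E(t) \leq -\eta\, E(t) + K$ for constants $\eta, K > 0$ depending only on the structural parameters $a, b, c, \gd, p, \gl, \gb$. Gronwall's inequality then gives $E(t) \leq E(0)\, e^{-\eta t} + K/\eta$, so the solution is bounded in $H$ uniformly for $t \geq 0$; in particular $T_{\max} = \infty$ and the solution belongs to $C([0,\infty), H)$, with uniqueness already guaranteed by the local theory. The main obstacle is the bookkeeping for the boundary feedback in the energy identity, in particular accounting for the overlap of the row and column feedback at the four corner cells; but once the telescoping structure above is recognized, this reduces to verifying that the relevant boundary sums carry the correct nonpositive sign.
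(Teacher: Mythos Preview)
Your proposal is correct and follows essentially the same strategy as the paper: local existence via local Lipschitz continuity, a dissipative energy estimate using the assumption \eqref{Asp} for the nonlinearity, summation by parts under \eqref{pbc} for the discrete Laplacian, and the sign-definite telescoping of the boundary feedback, then Gronwall to preclude blow-up. The only cosmetic difference is your choice of weights: you take the $y$-equation with weight $b/c$ so that the cross terms cancel exactly, whereas the paper takes the $x$-equation with weight $C_1=\gd/(2b)$ and handles the cross term by Young's inequality before choosing $C_1$; both lead to the same differential inequality $\tfrac{d}{dt}E\le -\eta E + K$.
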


\begin{proof}
Multiply the $x_\ik$-equation in \eqref{CN} by $C_1 x_\ik (t)$ for $(i,k) \in \zmn$, where the constant $C_1 > 0$ is to be chosen, then sum them up and by the Assumption \eqref{Asp} to get	

\begin{equation} \bl{u1}
	\begin{split}
	&\frac{C_1}{2} \frac{d}{dt} \sum_{i = 1}^m \sum_{k=1}^n |x_\ik |^2 = C_1 \smn \left[\, a(x_{i-1, \, k} - 2x_\ik + x_{i+1, \, k}) x_\ik  \right. \\[4pt]
	&\left. + a(x_{i, \, k-1} - 2x_\ik + x_{i, \, k+1}) x_\ik + f(x_\ik) x_\ik - b x_\ik  y_\ik + p u_\ik x_\ik \, \right]   \\[4pt]
	\leq &\, C_1 \smn \left[\, a(x_{i-1, \, k} - 2x_\ik + x_{i+1, \, k}) x_\ik  + a(x_{i, \, k-1} - 2x_\ik + x_{i, \, k+1}) x_\ik \, \right] \\
	+ &\, C_1 \smn \left[- \gl |x_\ik |^4 + \beta + \frac{b}{2}\, |x_\ik |^2 + \frac{b}{2}\, |y_\ik |^2 \right]  \\
	- &\, C_1 \smn p \left[(x_{1, k} - x_{m, k})^2 + (x_{i,1} - x_{i, n})^2 \right], 
	\end{split}
\end{equation}
for $t \in I_{max} = [0, T_{max})$, which is the maximal existence interval of the solution. By the discrete 'divergence' formula and the boundary condition \eqref{pbc}, we have
\beq \bl{key}
	\begin{split}
	&\smn \left[\, a(x_{i-1, \, k} - 2x_\ik + x_{i+1, \, k}) x_\ik + a(x_{i, \, k-1} - 2x_\ik + x_{i, \, k+1}) x_\ik \right]  \\
	= &\, \smn a [ (x_{i+1, k} - x_\ik)x_\ik - (x_\ik - x_{i-1, k})x_\ik ]  \\
	&\,+ \smn a [ (x_{, k+1} - x_\ik)x_\ik - (x_\ik - x_{i, k-1})x_\ik ]  \\
	= &\, \sum_{k=1}^n \left[ \sum_{i=1}^{m-1} a(x_{i+1,k} - x_\ik) x_\ik- \sum_{i=2}^m a(x_\ik - x_{i-1, k})x_\ik \right]  \\
	&\, + \sum_{k=1}^n \left[ a(x_{m+1, \,k} - x_{m\, k})x_{m\, k} - a(x_{1, \,k} - x_{0, \,k})x_{1,\, k} \right]  \\
	&\, + \sum_{i=1}^m \left[ \sum_{k=1}^{n-1} a(x_{i,\, k+1} - x_\ik) x_\ik- \sum_{k=2}^n a(x_\ik - x_{i,\, k-})x_\ik \right]  \\
	&\, + \sum_{i=1}^m \left[ a(x_{i, \,n+1} - x_{i\, n})x_{i\, n} - a(x_{i, \,1} - x_{i, \,0})x_{i,\, 1} \right]  \\
	= &\, - \sum_{k=1}^n \sum_{i=1}^{m} a(x_\ik - x_{i-1, \, k})^2 - \sum_{i=1}^m \sum_{k=1}^{n} a(x_\ik - x_{i, \, k-1})^2\leq 0.
	\end{split}
\eeq
Then \eqref{u1} with \eqref{key} yields the differential inequality
\beq \bl{u2}
	\begin{split}
	&C_1 \frac{d}{dt} \smn |x_\ik |^2 + 2C_1 p\, \smn \left[(x_{1, k} - x_{m, k})^2 + (x_{i, 1} - x_{i, n})^2 \right] \\
	\leq &\,C_1 \smn \left[- 2\gl |x_\ik (t)|^4 + 2\beta + b |x_\ik (t)|^2 + b |y_\ik (t)|^2 \right],  \quad t \in I_{max}.
	\end{split}
\eeq

Next multiply the $y_\ik$-equation in \eqref{CN} by $y_\ik (t)$ for $1 \leq i \leq m, 1 \leq k \leq n$ and then sum them up. By using Young's inequality, we obtain
\begin{equation} \bl{w1}
	\begin{split}
	&\frac{1}{2} \frac{d}{dt} \smn |y_\ik (t) |^2 = \smn ( cx_\ik \, y_\ik - \gd y_\ik^2) \\
	\leq &\, \smn \left[\left(\frac{c^2}{\gd} x_\ik^2 + \frac{1}{4} \gd \,y_\ik^2\right) - \gd \,y_\ik^2\right]  \\
	= &\, \smn \left[\frac{c^2}{\gd} \, |x_\ik (t)|^2 - \frac{3}{4} \gd \,|y_\ik(t)|^2\right], \quad \text{for} \;\, t \in I_{max}.
	\end{split}
\end{equation}
Add up the inequalities \eqref{u2} and doubled \eqref{w1}. We obtain
\beq \bl{uw}
	\begin{split}
        &\frac{d}{dt} \smn \left[C_1 |x_\ik (t)|^2 + | y_\ik (t)|^2 \right] \\
        &+ 2C_1 p\, \smn \left[(x_{1, k} - x_{m, k})^2 + (x_{i, 1} - x_{i, n})^2 \right] \\
        \leq &\smn \left[ \left(C_1 b + \frac{2c^2}{\gd}\right) |x_\ik (t)|^2 - 2C_1 \gl |x_\ik (t)|^4 + 2C_1 \beta \right] \\
        & + \smn \left[\left(C_1 b - \frac{3 \gd}{2}\right) |y_\ik (t)|^2 \right], \;\;  t \in I_{max} = [0, T_{max}).
	\end{split}
\eeq
We now choose the constant 
\beq \bl{C1}
	C_1 = \frac{\gd}{2b} \quad \text{so that} \quad C_1 b - \frac{3\gd}{2} = - \gd.
\eeq
Then from \eqref{uw} with the fact $2C_1 p \smn [ \cdots ] \geq 0$ on the left-hand side and from the choice of the constant $C_1$ in \eqref{C1}, we have

\begin{gather*}
	 \frac{d}{dt} \smn \left(C_1 |x_\ik |^2 + | y_\ik |^2 \right)   \\
	 \leq \smn \left[ \left(C_1 b + \frac{2c^2}{\gd}\right)|x_\ik (t)|^2- 2C_1 (\gl |x_\ik (t)|^4 + \beta) - \gd |y_\ik (t)|^2\right] 
\end{gather*}
and consequently,
\beq \bl{Cuw}
	\begin{split}
	&\frac{d}{dt} \smn \left(C_1 |x_\ik (t) |^2 + | y_\ik (t)|^2 \right) + \gd  \smn \left( C_1 |x_\ik (t) |^2 + | y_\ik (t)|^2 \right)   \\
	\leq &\, \smn \left[ \left(C_1 b + C_1 \gd + \frac{2c^2}{\gd}\right)|x_\ik (t)|^2- 2C_1 (\gl |x_\ik (t)|^4 + \beta)\right]   \\
	= &\, \smn  \left[ \left(\frac{\gd}{2} + \frac{\gd^2}{2b} + \frac{2c^2}{\gd}\right) |x_\ik (t)|^2- \frac{\gd \gl}{b} |x_\ik (t)|^4 + \frac{\gd \beta}{b} \right], \quad t \in I_{max}.
	\end{split}
\eeq
Completing square shows that
\begin{align*}
	 &\left(\frac{\gd}{2} + \frac{\gd^2}{2b} + \frac{2c^2}{\gd} \right) |x_\ik (t)|^2- \frac{\gd \gl}{b} |x_\ik (t)|^4  \\
	 = &\, - \frac{\gd \gl}{b} \left[ | x_\ik (t)|^2 - \frac{b}{2\gd \gl} \left(\frac{\gd^2}{2b} + \frac{\gd}{2} + \frac{2 c^2}{\gd}\right) \right]^2 + C_2
\end{align*}
and
\beq \bl{C2}
	C_2 = \frac{b}{4\gd \gl} \left(\frac{\gd^2}{2b} + \frac{\gd}{2} + \frac{2 c^2}{\gd}\right)^2.
\eeq
Therefore, \eqref{Cuw} yields
\beq \bl{Suw}
	\frac{d}{dt} \smn (C_1 |x_\ik |^2 + | y_\ik |^2) + \gd  \smn ( C_1 |x_\ik |^2 + | y_\ik |^2) \leq mn \left[C_2 + \frac{\gd \beta}{b}\right], \;\, t \in I_{max}.
\eeq
Apply the Gronwall inequality to \eqref{Suw}. Then we have the following bounded estimate for all the solutions of the system \eqref{CN}-\eqref{inc},
\beq \label{dse}
	\begin{split}
	& \smn \left(|x_\ik (t, x_\ik^0) |^2 + |y_\ik (t, y_\ik^0)|^2 \right) \\
	\leq &\, \frac{1}{\min \{C_1, 1\}} \left[e^{- \gd \, t} \smn (C_1 |x_\ik^0 |^2 + |y_\ik^0 |^2) + \frac{mn}{\gd} \left(C_2 + \frac{\gd \beta}{b}\right)\right], \;\; t \in [0, \infty). \\
	\end{split}
\eeq
Here we can assert that $I_{max} = [0, \infty)$ for all the solutions because the bounded estimate \eqref{dse} shows that the solutions will never blow up at any finite time.  Thus it is proved that for any given initial state in $H$ there exists a unique global solution $((x_\ik (t, x_\ik^0), y_\ik (t, y_\ik^0)): (i,k) \in \zmn ), \, t \in [0, \infty)$, in $H$. 
\end{proof}

The global existence and uniqueness of the solutions to the initial-boundary value problem \eqref{CN}-\eqref{inc} and their continuous dependence on the initial data enable us to define the solution semiflow $\{S(t): H \to H\}_{t \geq 0}$ of this system of the two-dimensional FitzHugh-Nagumo cellular neural network:
$$
	S(t): ((x_\ik^0, y_\ik^0): (i,k) \in \zmn) \longmapsto ((x_\ik (t, x_\ik^0), y_\ik (t, y_\ik^0)): (i,k) \in \zmn).
$$
We call $\{S(t)\}_{t \geq 0}$ the semiflow of the FitzHugh-Nagumo CNN with the boundary feedback.

\begin{theorem} \label{Dsp}
	The semiflow $\{S(t)\}_{t \geq 0}$ of the FitzHugh-Nagumo CNN with the boundary feedback is dissipative in the sense that there exists a bounded ball in the space $H$,
\beq \label{abs}
	B^* = \{g \in H: \| g \|^2 \leq Q\}
\eeq 
where the constant, which is independent of any initial data,
\beq \bl{Q}
	Q = \frac{1}{\min \{C_1, 1\}} \left[1 +  \frac{mn}{\gd} \left(C_2 + \frac{\gd \beta}{b}\right) \right]
\eeq
such that for any given bounded set $B \subset H$, there is a finite time $T_B > 0$ and all the solutions with the initial state inside the set $B$ will permanently enter the ball $B^*$ for $t \geq T_B$. Then $B^*$ is called an absorbing set of the semiflow in the space $H$.
\end{theorem}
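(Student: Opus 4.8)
The plan is to obtain the dissipativity directly from the uniform \emph{a priori} estimate \eqref{dse} already established in the proof of Theorem \ref{Tm}, so that this statement becomes essentially a corollary of that bound. First I would fix an arbitrary bounded set $B \subset H$ and let $R = \sup_{(x^0, y^0) \in B} \|(x^0, y^0)\|^2 < \infty$ be its squared radius. Since the weighted quantity appearing on the right-hand side of \eqref{dse} obeys $\smn (C_1 |x_\ik^0|^2 + |y_\ik^0|^2) \leq \max\{C_1, 1\}\, R$, substituting this into \eqref{dse} yields, for every initial state in $B$ and every $t \geq 0$, the bound
\[
\smn (|x_\ik(t)|^2 + |y_\ik(t)|^2) \leq \frac{1}{\min\{C_1, 1\}}\left[ \max\{C_1, 1\}\, R\, e^{-\gd t} + \frac{mn}{\gd}\left(C_2 + \frac{\gd \beta}{b}\right)\right].
\]

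The decomposition of this bound into a transient term decaying like $e^{-\gd t}$ and a fixed asymptotic term is the heart of the matter. Taking $\limsup_{t \to \infty}$ annihilates the transient and gives $\limsup_{t\to\infty}\|(x,y)(t)\|^2 \leq \frac{mn}{\gd \min\{C_1,1\}}(C_2 + \frac{\gd\beta}{b})$, which is strictly smaller than the radius $Q$ of $B^*$ defined in \eqref{Q}; the extra summand $1$ inside the bracket of \eqref{Q} supplies the margin that makes $B^*$ genuinely absorbing rather than merely attracting.

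Next I would make the entering time explicit. I would choose $T_B \geq 0$ large enough that the transient is dominated by that margin, i.e. so that $\max\{C_1,1\}\, R\, e^{-\gd t} \leq 1$ for all $t \geq T_B$; solving the exponential gives $T_B = \max\{0, \gd^{-1}\ln(\max\{C_1,1\}\, R)\}$. For $t \geq T_B$ the displayed bound then collapses to $\|(x,y)(t)\|^2 \leq Q$, so every solution starting in $B$ lies in $B^*$ once $t \geq T_B$. Because the transient factor $e^{-\gd t}$ is monotonically decreasing in $t$, the bound continues to hold at all later times, so the solutions do not merely reach $B^*$ but remain in it permanently, which is precisely the absorbing property asserted.

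I do not anticipate a genuine obstacle here, since the substantive work---the cancellation of the discrete Laplacian and boundary-feedback terms in \eqref{key} and the construction of the dissipative differential inequality \eqref{Suw}---was already carried out in the proof of Theorem \ref{Tm}. The only points requiring care are the bookkeeping of the weights $\min\{C_1,1\}$ and $\max\{C_1,1\}$ when converting between the norm $\|(x,y)\|^2$ and the weighted quantity in \eqref{dse}, and the degenerate case $\max\{C_1,1\}\, R \leq 1$, in which one simply takes $T_B = 0$.
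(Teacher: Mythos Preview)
Your proposal is correct and follows essentially the same route as the paper: both derive the absorbing property directly from the uniform estimate \eqref{dse}, bound the weighted initial data by $\max\{C_1,1\}$ times the squared radius of $B$, and solve the resulting exponential inequality to obtain the explicit entering time $T_B = \gd^{-1}\log^+(\max\{C_1,1\}\,R)$. Your treatment is in fact slightly more careful than the paper's in distinguishing the $\leq$ versus $<$ at $t=T_B$ and in handling the degenerate case $\max\{C_1,1\}\,R \leq 1$.
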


\begin{proof}
The bounded estimate \eqref{dse} implies that
	\beq \label{lsp}
	\limsup_{t \to \infty} \,\smn \left(|x_\ik (t, x_\ik^0) |^2 + |y_\ik (t, y_\ik^0)|^2 \right) < Q 
	\eeq
for all the solutions of \eqref{CN}-\eqref{inc} with any initial data $((x_\ik^0, y_\ik^0): (i,k) \in \zmn) \in H$. Moreover, for any given bounded set $B = \{g \in H: \|g \|^2 \leq \rho \}$ in $H$, there is a finite time 
$$
	T_B = \frac{1}{\gd} \log^+ (\rho \, \max \{C_1, 1\} )
$$	
such that 
$$
	e^{- \gd \, t} \smn \left(C_1 |x_\ik^0 |^2 + |y_\ik^0 |^2 \right) < 1, \quad \text{for} \;\, t \geq T_B,
$$
which means all the solution trajectories started from the set $B$ will uniformly and permanently enter the bounded ball $B^*$ shown in \eqref{abs} for $t \geq T_B$. Therefore, the ball $B^*$ in \eqref{abs} is an absorbing set and this semiflow of the FitzHugh-Nagumo CNN with the boundary feedback is dissipative in $H$.
\end{proof}

\section{\textbf{Synchronization of the 2D FitzHugh-Nagumo CNN}} 

Define the difference of solutions for two adjacent double-indexed cells of this FitzHugh-Nagumo CNN \eqref{CN} to be
\beq \bl{DF}
	\begin{split}
	\G_\ik (t) &= x_\ik (t) - x_{i-1,k} (t), \quad V_\ik (t) = y_\ik (t) - y_{i-1,k} (t), \;\; \text{for}\;\, (i, k) \in \zmn ; \\
	\Pi_\ik (t) &= x_\ik (t) - x_{i,k-1} (t), \quad W_\ik (t) = y_\ik (t) - y_{i,k-1} (t), \;\; \text{for}\;\, (i, k) \in \zmn .
	\end{split}
\eeq
We shall consider the system of the \emph{row-differencing} equations for this CNN:
\beq \bl{dHR}
	\begin{split}
		\frac{\pdr \G_\ik}{\pdr t} = \,& a (\G_{i-1,k} - 2\G_\ik + \G_{i+1,k}) + a (\G_{i, \,k-1} - 2\G_\ik + \G_{i, \,k+1})  \\[2pt]
		& + f(x_\ik) - f(x_{i-1,k}) - b V_\ik + p(u_\ik - u_{i-1,k}),  \\
		\frac{\pdr V_\ik }{\pdr t} = \, & c\, \G_\ik - \gd V_\ik,    \quad \text{for} \;\, 1 \leq k \leq n;
	\end{split}
\eeq
and the system of the \emph{column-differencing} equations for this CNN:
\beq \bl{gHR}
	\begin{split}
		\frac{\pdr \Pi_\ik}{\pdr t} = \,& a (\Pi_{i-1,k} - 2\Pi_\ik + \Pi_{i+1,k}) + a (\Pi_{i, \,k-1} - 2\Pi_\ik + \Pi_{i, \,k+1})  \\[2pt]
		& + f(x_\ik) - f(x_{i, k-1}) - b W_\ik + p(u_\ik - u_{i, k-1}),  \\
		\frac{\pdr W_\ik }{\pdr t} = \, & c\, \Pi_\ik - \gd W_\ik,   \quad \text{for} \;\, 1 \leq i \leq m.
	\end{split}
\eeq
According to the periodic boundary condition \eqref{pbc}, the corresponding boundary condition for the equations \eqref{dHR} is 
\beq \bl{pBC}
	\begin{split}
	\G_{0, k} (t) & = \G_{m,k} (t), \quad \G_{m+1, k} (t) = \G_{1, k} (t), \quad \text{for} \;\;  1 \leq k \leq n;  \\
	\Pi_{i, 0} (t) & = \Pi_{i, n} (t), \, \quad \Pi_{i, n+1} (t) = \Pi_{i, 1} (t), \; \quad \text{for} \;\;  1 \leq i \leq m.
	\end{split}
\eeq

Here is the main result on the synchronization of this 2D FitzHugh-Nagumo CNN with the boundary feedback.

\begin{theorem} \bl{ThM}
	If the following threshold condition for the boundary gap signals of the 2D FitzHugh-Nagumo cellular neural network \eqref{CN}-\eqref{inc} is satisfied,  
\beq \bl{SC}
	 \liminf_{t \to \infty} \left[\sum_{k=1}^n |x_{m,k}(t) - x_{1,k}(t)|^2 + \sum_{i=1}^m |x_{i, n}(t) - x_{i, 1}(t)|^2 \right] > Q \left[1 + \frac{1}{p} (\gd + \ga + 2|c - b|) \right]
\eeq
where the constant $Q$ is given in \eqref{Q}, then this cellular neural network is asymptotically synchronized in the space $H$ at a uniform exponential rate. That is, for any initial data $((x_\ik^0, y_\ik^0): (i,k) \in \zmn) \in H$,
\beq \bl{rsyn}
	\begin{split}
	&\lim_{t \to \infty} \smn \left(| x_\ik (t) - x_{i-1, k}(t)|^2 + |y_\ik (t) - y_{i-1, k}(t)|^2 \right)   \\
	+ &\, \lim_{t \to \infty} \smn \left(|x_\ik (t) - x_{i, k-1} (t)|^2 + |y_\ik (t) - y_{i, k-1} (t)|^2 \right)   \\
	= &\, \lim_{t \to \infty} \smn \left(|(\G_\ik (t)|^2 + |V_\ik (t)|^2 + |\Pi_\ik (t)|^2 + |W_\ik (t)|^2\right) = 0,
	\end{split}
\eeq
\end{theorem}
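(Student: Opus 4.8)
The plan is to run a single energy estimate on the full differenced system, combining the row-differencing equations \eqref{dHR} and the column-differencing equations \eqref{gHR}. I would work with the synchronization functional
$$
	E(t) = \smn \left( |\G_\ik(t)|^2 + |V_\ik(t)|^2 + |\Pi_\ik(t)|^2 + |W_\ik(t)|^2 \right),
$$
which is exactly the quantity whose limit is asserted to vanish in \eqref{rsyn}, and aim to produce a differential inequality that, once the threshold \eqref{SC} is in force, drives $E(t) \to 0$ at a uniform exponential rate. Thus the whole argument reduces to computing $dE/dt$ and showing that the boundary feedback supplies enough dissipation to overcome the terms that the nonlinearity and the inter-component coupling contribute with the wrong sign.

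To obtain $dE/dt$ I would multiply each of the four difference equations by its own unknown ($\G_\ik, V_\ik, \Pi_\ik, W_\ik$), sum over $(i,k) \in \zmn$, and add. Four contributions then appear. The two discrete Laplacians are disposed of exactly as in the computation \eqref{key}: discrete summation by parts with the periodic boundary condition \eqref{pBC} turns them into $-a\smn(\G_\ik - \G_{i-1,k})^2$ and its column analogue, both $\leq 0$, so they can only help. The nonlinear differences are linearized by the mean value theorem and the one-sided bound $f' \leq \ga$ from \eqref{Asp}: writing $f(x_\ik) - f(x_{i-1,k}) = f'(\xi)\,\G_\ik$ gives $(f(x_\ik)-f(x_{i-1,k}))\G_\ik \leq \ga\,|\G_\ik|^2$, and similarly for the column system, producing the destabilizing factor $\ga$. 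The linear coupling between the two components leaves the cross terms $(c-b)\G_\ik V_\ik$ and $(c-b)\Pi_\ik W_\ik$, which I would absorb by Young's inequality against the genuinely dissipative $-\gd|V_\ik|^2$ and $-\gd|W_\ik|^2$ coming from the second equations; this is the source of the factor $2|c-b|$ in \eqref{SC}.

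The heart of the matter is the feedback difference terms $p\,(u_\ik - u_{i-1,k})\G_\ik$ and $p\,(u_\ik - u_{i,k-1})\Pi_\ik$. From the explicit feedback \eqref{bfc1}--\eqref{bfc2}, the difference $u_\ik - u_{i-1,k}$ is nonzero only on the boundary rows $i \in \{1,2,m\}$ and the column difference only on $k \in \{1,2,n\}$. Evaluating there and using $\G_{1,k} = x_{1,k} - x_{m,k}$ from \eqref{pbc} (so that $u_{1,k}-u_{0,k} = -2\G_{1,k}$), the dominant part is the negative-definite boundary dissipation proportional to $-\sum_{k}(x_{m,k}-x_{1,k})^2 - \sum_{i}(x_{i,n}-x_{i,1})^2 = -S(t)$, where $S(t)$ is precisely the boundary-gap quantity in \eqref{SC}, while the remainder is lower-order cross terms coupling the boundary gaps to their adjacent differences $\G_{2,k},\G_{m,k},\Pi_{i,2},\Pi_{i,n}$. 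Every positive, state-dependent quantity generated so far — the $\ga$-term, the $|c-b|$-term, and these feedback cross terms — is quadratic in the solution and its differences, so for $t \geq T_B$ I would cap them uniformly by the absorbing-set estimate of Theorem \ref{Dsp}, where $\|(x,y)\|^2 \leq Q$ with $Q$ from \eqref{Q}.

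Assembling the pieces should yield, for all large $t$, an inequality whose shape is
$$
	\frac{dE}{dt} + \gd\,E(t) \;\leq\; Q\,(\gd + \ga + 2|c-b|) + pQ - p\,S(t),
$$
in which the artificial $+\gd E$ on the left is paid for by the matching $Q\gd$ on the right via the same absorbing-set bound. The threshold \eqref{SC} is exactly the statement that $p\,S(t)$ eventually exceeds $Q\left[\,p + \gd + \ga + 2|c-b|\,\right]$, i.e. that the right-hand side becomes negative for all large $t$; a Gronwall comparison then forces $E(t)$ to zero at a uniform exponential rate, which is \eqref{rsyn}. I expect the main obstacle to be the bookkeeping of the feedback difference term: pinning down exactly which boundary rows and columns support $u_\ik - u_{i-1,k}$ and $u_\ik - u_{i,k-1}$, extracting the clean boundary dissipation $-p\,S(t)$ with its correct constant, and showing the leftover cross terms are dominated tightly enough that the explicit quantity $Q\left[1 + \tfrac1p(\gd + \ga + 2|c-b|)\right]$ emerges as the sharp threshold rather than a loose sufficient bound.
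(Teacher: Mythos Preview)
Your proposal is correct and follows essentially the same route as the paper's proof: the paper likewise computes $\tfrac{d}{dt}E(t)$ by multiplying \eqref{dHR}--\eqref{gHR} by their unknowns and summing, discards the discrete Laplacians by summation-by-parts under \eqref{pBC}, bounds the nonlinear differences via the mean value theorem and $f'\leq\ga$, absorbs the $(c-b)$ cross terms by Young, evaluates the feedback differences explicitly to extract a negative multiple of $S(t)$ plus a residual (which in the paper reduces to $p\sum_k(x_{m-1,k}-x_{2,k})^2 + p\sum_i(x_{i,n-1}-x_{i,2})^2$), and then caps every positive term by the absorbing radius $Q$ from Theorem~\ref{Dsp} before invoking \eqref{SC} and Gronwall. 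Your anticipated obstacle---the bookkeeping of the feedback difference terms to recover the exact threshold constant---is indeed where the paper spends most of its effort (its Step~2, equations \eqref{Gu}--\eqref{Pu}).
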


\begin{proof}
	 For any given $1 \leq k \leq n$, multiply the first equation in \eqref{dHR} by $\G_\ik (t)$ and the second equation in \eqref{dHR} by $V_\ik (t)$. For any given $1 \leq i \leq m$, multiply the first equation in \eqref{gHR} by $\Pi_\ik (t)$ and the second equation in \eqref{gHR} by $W_\ik (t)$. Then sum all of them up for all $(i,k) \in \zmn $. We obtain
\beq \bl{eG}
	\begin{split}
	&\frac{1}{2} \frac{d}{dt} \smn \left(|\G_\ik (t)|^2 + |V_\ik (t)|^2 + |\Pi_\ik (t)|^2 + |W_\ik (t)|^2 \right)  \\
	&- \smn \, \left[a(\G_{i-1,k} - 2\G_\ik + \G_{i+1,k}) \G_\ik + a (\G_{i, \,k-1} - 2\G_\ik + \G_{i, \,k+1})\G_\ik \right] \\
	&- \smn \, \left[a(\Pi_{i-1, k} - 2\Pi_\ik + \Pi_{i-1, k}) \Pi_\ik + a (\Pi_{i, \,k-1} - 2\Pi_\ik + \Pi_{i, \,k+1})\Pi_\ik \right] \\
	= &\, \smn \,\left[(f(x_\ik) - f(x_{i-1,k}) - b V_\ik + p(u_\ik - u_{i-1,k})\right] \G_\ik \\
	&+ \smn \,\left[ f(x_\ik) - f(x_{i, k-1}) - b W_\ik + p(u_\ik - u_{i, k-1}) \right] \Pi_\ik \\
	&+ \smn \,\left[(c\, \G_\ik - \gd V_\ik) V_\ik + (c\, \Pi_\ik - \gd W_\ik)W_\ik \right]  \\
	= &\, \smn \, \left[ (f(x_\ik) - f(x_{i-1,k})) \G_\ik + (f(x_\ik) - f(x_{i, k-1}) \Pi_\ik \right] \\
	&\,+ \smn \, \left[(c - b)(\G_\ik V_\ik + \Pi_\ik W_\ik)  - \gd (|V_\ik |^2 + |W_\ik |^2) \right] \\
	&\, + \smn \, p \left[ (u_\ik - u_{i-1,k}) \G_\ik + (u_\ik - u_{i, k-1}) \Pi_\ik \right].
	\end{split}
\eeq
By the Assumption \eqref{Asp}, from \eqref{eG} it follows that 
\beq \bl{AG}
	\begin{split}
	&\frac{1}{2} \frac{d}{dt} \smn \left(|\G_\ik (t)|^2 + |V_\ik (t)|^2 + |\Pi_\ik (t)|^2 + |W_\ik (t)|^2 \right)  \\
	&- \smn \, \left[a(\G_{i-1,k} - 2\G_\ik + \G_{i+1,k}) \G_\ik + a (\G_{i, \,k-1} - 2\G_\ik + \G_{i, \,k+1})\G_\ik \right]. \\
	&- \smn \, \left[a(\Pi_{i-1, k} - 2\Pi_\ik + \Pi_{i-1, k}) \Pi_\ik + a (\Pi_{i, \,k-1} - 2\Pi_\ik + \Pi_{i, \,k+1})\Pi_\ik \right]. \\
	\leq &\, \smn \, \left[\ga (|\G_\ik |^2 + |\Pi_\ik |^2) + |c - b| (\G_\ik V_\ik + \Pi_\ik W_\ik)  - \gd (|V_\ik |^2 + |W_\ik |^2) \right]   \\
	&\, + \smn \, p \left[ (u_\ik - u_{i-1,k}) \G_\ik + (u_\ik - u_{i, k-1}) \Pi_\ik \right],   \quad  t \in [0, \infty),
	\end{split}
\eeq
because for any $(i, k) \in \zmn$, there are $0 \leq \xi_\ik \leq 1$ and $0 \leq \eta_\ik \leq 1$ such that
\begin{align*}
	&(f(x_\ik) - f(x_{i-1,k})) \G_\ik + (f(x_\ik) - f(x_{i, k-1})) \Pi_\ik  \\[3pt]
	= &\, f^{\,\prime} (\xi_\ik x_\ik + (1-\xi_\ik) x_{i-1, k}) \G^2_\ik + f^{\,\prime} (\eta_\ik x_\ik + (1-\eta_\ik) x_{i, k-1}) \Pi^2_\ik \leq \ga (|\G_\ik |^2 + |\Pi_\ik |^2).
\end{align*}
Then the further treatmant will go through the following steps.

Step 1. The two sums without the coefficient $a > 0$ on the left-hand side of \eqref{AG} can be expressed as
\beq \bl{rcLp}
	\begin{split}
	& - \smn \, \left[(\G_{i-1,k} - 2\G_\ik + \G_{i+1,k}) \G_\ik + (\G_{i, \,k-1} - 2\G_\ik + \G_{i, \,k+1})\G_\ik \right] \\
	& - \smn \, \left[(\Pi_{i-1, k} - 2\Pi_\ik + \Pi_{i+1, k}) \Pi_\ik - (\Pi_{i, \,k-1} + 2\Pi_\ik + \Pi_{i, \,k+1})\Pi_\ik \right] \\[2pt]
	= &\, - \smn \, \left[(\G_{i+1, k} - \G_\ik)\G_\ik - (\G_\ik - \G_{i-1, k})\G_\ik \right]  \\
	&- \smn \, \left[(\G_{i, k+1} - \G_\ik)\G_\ik - (\G_\ik - \G_{i, k-1})\G_\ik \right]  \\
	&- \smn \, \left[(\Pi_{i+1, k} - \Pi_\ik)\Pi_\ik - (\Pi_\ik - \Pi_{i-1, k})\Pi_\ik \right] \\
	&- \smn \, \left[(\Pi_{i, k+1} - \Pi_\ik)\Pi_\ik - (\Pi_\ik - \Pi_{i, k-1})\Pi_\ik \right].
	\end{split}
\eeq
The four sums on the right-hand side of \eqref{rcLp} can be treated by using the periodic boundary condition \eqref{pBC}. Among them the first sum is
\beq \bl{Gi}
	\begin{split}
	& - \smn \, \left[(\G_{i+1, k} - \G_\ik)\G_\ik - (\G_\ik - \G_{i-1, k})\G_\ik \right]  \\[4pt]
	= &\, - \sum_{k=1}^n \left(\sum_{i=1}^{m-1}\, (\G_{i+1, k} - \G_\ik ) \G_\ik - \sum_{i=2}^m (\G_\ik- \G_{i-1, k})\G_\ik \right)  \\[3pt]
	&\, - \sum_{k=1}^n \left((\G_{m+1, k} - \G_{m, k})\G_{m, k} - (\G_{1, k} - \G_{0, k})\G_{1, k} \right) \\
	= &\,\sum_{k=1}^n \left(\sum_{i=2}^{m}\, (\G_\ik - \G_{i-1, k})^2 + (\G_{1, k}^2 + \G_{m, k}^2) - (\G_{m+1, k}\, \G_{m, k} + \G_{0, k}\, \G_{1,k})\right)   \\[3pt]
	= &\, \sum_{k=1}^n \left( \sum_{i=2}^{m}\, (\G_\ik - \G_{i-1, k})^2 + (\G_{1, k}^2 + \G_{0, k}^2) - 2 \G_{1, k}\, \G_{0,k} \right)  \\[3pt]
	= &\, \sum_{k=1}^n \left(\sum_{i=2}^{m}\, (\G_\ik - \G_{i-1, k})^2 + (\G_{1, k} - \G_{0, k})^2 \right) = \sum_{k=1}^n \sum_{i=1}^{m} \, (\G_\ik- \G_{i-1, k})^2 \geq 0.
	\end{split}
\eeq          
Similarly the three other sums on the right-hand side of \eqref{rcLp} can be treated and result in the following inequalities,
\beq \bl{Gk}
	\begin{split}
	&- \smn \, \left[(\G_{i, k+1} - \G_\ik)\G_\ik - (\G_\ik - \G_{i, k-1})\G_\ik \right] \, = \sum_{i=1}^m \sum_{k=1}^{n} \, (\G_\ik- \G_{i, k-1})^2 \geq 0    \\[4pt]
	&- \smn \, \left[(\Pi_{i+1, k} - \Pi_\ik)\Pi_\ik - (\Pi_\ik - \Pi_{i-1, k})\Pi_\ik \right] = \sum_{k=1}^n \sum_{i=1}^{m} \, (\Pi_\ik - \Pi_{i-1, k})^2 \geq 0   \\[4pt]
	&- \smn \, \left[(\Pi_{i, k+1} - \Pi_\ik)\Pi_\ik - (\Pi_\ik - \Pi_{i, k-1})\Pi_\ik \right] =  \sum_{i=1}^m \sum_{k=1}^{n} \, (\Pi_\ik - \Pi_{i, k-1})^2 \geq 0.    
	\end{split}
\eeq
Substitute the inequalities \eqref{Gi} and \eqref{Gk} into \eqref{rcLp}. Then the nonnegativity of \eqref{rcLp} shows that \eqref{AG} implies

\beq  \bl{GPVW}
	\begin{split}
	&\frac{1}{2} \frac{d}{dt} \smn \left(|\G_\ik (t)|^2 + |V_\ik (t)|^2 + |\Pi_\ik (t)|^2 + |W_\ik (t)|^2 \right) + \smn \gd (|V_\ik |^2 + |W_\ik |^2) \\
	\leq &\, \smn \, \left[\ga (|\G_\ik |^2 + |\Pi_\ik |^2) + |c - b| (\G_\ik V_\ik + \Pi_\ik W_\ik) \right]   \\
	&\, + \smn \, p \left[ (u_\ik - u_{i-1,k}) \G_\ik + (u_\ik - u_{i, k-1}) \Pi_\ik \right]   \\
	\leq &\, \smn \, \left[\ga (|\G_\ik |^2 + |\Pi_\ik |^2) + \frac{1}{2} |c - b| (|\G_\ik^2 + |V_\ik |^2 + |\Pi_\ik |^2 + |W_\ik |^2)  \right]   \\
	&\, + \smn \, p \left[ (u_\ik - u_{i-1,k}) \G_\ik + (u_\ik - u_{i, k-1}) \Pi_\ik \right],   \quad  t \in [0, \infty),
	\end{split}
\eeq

Step 2. The boundary feedback \eqref{bfc1}-\eqref{bfc2} and \eqref{pbc} infer that
\beq \bl{Gu}
	\begin{split}
	 & \smn \, p(u_\ik - u_{i-1, k})\G_\ik = \sum_{k=1}^n \left[\sum_{i=1}^m \, p(u_\ik - u_{i-1, k})(x_\ik - x_{i-1, k})\right]  \\
	 = &\, p \sum_{k=1}^n \left[(u_{1, k} - u_{0, k})(x_{1, k} - x_{0, k}) + (u_{2, k} - u_{1, k})(x_{2, k} - x_{1, k})\right]  \\
	 + &\, p \sum_{k=1}^n (u_{m, k} - u_{m-1, k})(x_{m,k} - x_{m-1,k})  \\
	 = &\, p \sum_{k=1}^n \left[(u_{1, k} - u_{0, k})(x_{1, k} - x_{0, k}) - u_{1, k} (x_{2, k} - x_{1, k}) + u_{m, k} (x_{m,k} - x_{m-1,k}) \right] \\
	 = &\, p \sum_{k=1}^n 2(x_{m,k} - x_{1, k})(x_{1,k} - x_{m,k})    \\
	 - &\, p \sum_{k=1}^n \left[(x_{m,k} - x_{1,k})(x_{2,k} - x_{1,k}) - (x_{1,k} - x_{m,k}) (x_{m,k} - x_{m-1,k})\right] \; (\text{by} \, \eqref{pbc}) \\
	 = &\, p \sum_{k=1}^n \left[- 3(x_{m,k} - x_{1,k})^2 + (x_{m,k} - x_{1,k})(x_{m-1,k} - x_{2,k})\right]   \\
	 \leq &\, p \sum_{k=1}^n \left[- 2(x_{m,k} - x_{1,k})^2 + (x_{m-1, k} - x_{2, k})^2 \right].
	 \end{split}
\eeq
Similarly we can deduce that
\beq  \bl{Pu}
	\begin{split}
	 & \smn \, p(u_\ik - u_{i, k-1}) \Pi_\ik = \sum_{i=1}^m \left[\sum_{k=1}^n \, p(u_\ik - u_{i, k-1})(x_\ik - x_{i, k-1})\right]  \\
	 \leq &\, p \sum_{i=1}^m \left[- 2(x_{i, n} - x_{i, 1})^2 + (x_{i, n-1} - x_{i, 2})^2 \right].
	 \end{split}
\eeq
Substitute \eqref{Gu} and \eqref{Pu} into \eqref{GPVW}. Then we come up with the following differential inequality
\beq  \bl{TK}
	\begin{split}
	&\frac{d}{dt} \smn \left(|\G_\ik (t)|^2 + |\Pi_\ik (t)|^2 + |V_\ik (t)|^2 + |W_\ik (t)|^2 \right) \\[2pt]
	&\, + 2 \gd \smn (|\G_\ik (t)|^2 + |\Pi_\ik (t)|^2 + |V_\ik |^2 + |W_\ik |^2) \\
	\leq &\, \smn \, 2 \left[ (\gd + \ga) (|\G_\ik |^2 + |\Pi_\ik |^2) + |c - b| (|\G_\ik^2 + |V_\ik |^2 + |\Pi_\ik |^2 + |W_\ik |^2)  \right]   \\
	&\, + \smn \, 2p \left[ (u_\ik - u_{i-1,k}) \G_\ik + (u_\ik - u_{i, k-1}) \Pi_\ik \right],  \\
	\leq &\, \smn \, 2 \left[ (\gd + \ga) (|\G_\ik |^2 + |\Pi_\ik |^2) + |c - b| (|\G_\ik^2 + |V_\ik |^2 + |\Pi_\ik |^2 + |W_\ik |^2)  \right]   \\
	&\, - 4p \left[\sum_{k=1}^n (x_{m,k} - x_{1,k})^2 + \sum_{i=1}^m (x_{i, n} - x_{i, 1})^2\right]  \\
	&\, + 2p \left[\sum_{k=1}^n (x_{m-1,k} - x_{2,k})^2 + \sum_{i=1}^m (x_{i, n-1} - x_{i, 2})^2\right], \quad  t \in [0, \infty).
	\end{split}
\eeq

Step 3. Note that \eqref{Q}-\eqref{lsp} in Theorem \ref{Dsp} confirms that for all solutions of \eqref{CN}-\eqref{inc},
$$
	\limsup_{t \to \infty}\, \smn \left(|x_\ik (t, x_\ik^0)|^2 + |y_\ik (t, y_\ik^0|^2\right) < Q
$$
and the bounded ball $B^*$ shown in \eqref{abs} is an absorbing set in the space $H$ for the semiflow of this 2D FitzHugh-Nagumo CNN with the boundary feedback. Therefore, for any given bounded set $B \subset H$ and any initial data $((x_1^0, y_i^0), \cdots , (x_n^0, y_n^0)) \in B$, there is a finite time $T_B \geq 0$ such that 
\beq \bl{bd}
	\begin{split}
	 & \smn \, 2 \left[ (\gd + \ga) (|\G_\ik |^2 + |\Pi_\ik |^2) + |c - b| (|\G_\ik^2 + |V_\ik |^2 + |\Pi_\ik |^2 + |W_\ik |^2)  \right]   \\
	 & + 2p \left[\sum_{k=1}^n (x_{m-1,k} - x_{2,k})^2 + \sum_{i=1}^m (x_{i, n-1} - x_{i, 2})^2\right]    \\[3pt]
	 < &\, 4 \left(\gd + \ga + 2|c - b| \right) Q + 4p\,Q = 4 \left(\gd + \ga + 2|c - b|  + p\right) Q, \quad \text{for} \;\, t \geq T_B.
	\end{split}
\eeq
Here we used \eqref{DF} which implies that for $t \geq T_B$,
$$
	\smn (|\G_\ik |^2 + |\Pi_\ik |^2) < 2Q , \quad  \smn (|\G_\ik^2 + |V_\ik |^2 + |\Pi_\ik |^2 + |W_\ik |^2) < 4Q,
$$
and
$$
	\sum_{k=1}^n (x_{m-1,k} - x_{2,k})^2 + \sum_{i=1}^m (x_{i, n-1} - x_{i, 2})^2 < 2Q.
$$
Combining \eqref{TK} and \eqref{bd}, we have shown that
\beq \bl{Mq}
	\begin{split}
	&\frac{d}{dt} \smn \left(|\G_\ik (t)|^2 + |\Pi_\ik (t)|^2 + |V_\ik (t)|^2 + |W_\ik (t)|^2 \right) \\
	&\, + 2 \gd \smn (|\G_\ik (t)|^2 + |\Pi_\ik (t)|^2 + |V_\ik (t)|^2 + |W_\ik (t) |^2) \\
	&\, + 4p \left[\sum_{k=1}^n (x_{m,k}(t) - x_{1,k}(t))^2 + \sum_{i=1}^m (x_{i, n}(t) - x_{i, 1}(t))^2\right]  \\[3pt]
	< &\, 4\left(\gd + \ga + 2|c - b|  + p\right) Q, \quad  \text{for} \;\,  t \geq T_B.
	\end{split}
\eeq

	For any given initial state $(x^0, y^0) = ((x_1^0, y_1^0), \cdots, (x_n^0, y_n^0)) \in H$ as a set of single point, there exists a finite time $T_{(x^0, \, y^0)}  > 0$ such that the differential inequality \eqref{Mq} holds for $t \geq T_{(x^0, \, y^0)}$ as well as 
$$
	\smn \left(|x_\ik (t, x_\ik^0) |^2 + |y_\ik (t, y_\ik^0)|^2 \right) < Q, \quad \text{for} \;\; t \geq T_{(x^0, \, y^0)}.
$$
Under the threshold condition \eqref{SC} of this theorem, for $t \geq T_{(x^0, \, y^0)}$,
\beq \bl{Thrs}
	 4p \left[\sum_{k=1}^n (x_{m,k}(t) - x_{1,k}(t))^2 + \sum_{i=1}^m (x_{i, n}(t) - x_{i, 1}(t))^2 \right] > 4\left(\gd + \ga + 2|c - b|  + p\right) Q.
\eeq
It follows from \eqref{Mq} and \eqref{Thrs} that
\beq \bl{Gwq}
\begin{split}
	&\frac{d}{dt} \smn \left(|\G_\ik (t)|^2 + |\Pi_\ik (t)|^2 + |V_\ik (t)|^2 + |W_\ik (t)|^2 \right) \\
	+ \, 2 \gd &\, \smn (|\G_\ik (t)|^2 + |\Pi_\ik (t)|^2 + |V_\ik (t) |^2 + |W_\ik (t)|^2) < 0, \;\; t \geq T_{(x^0, \, y^0)}.
	\end{split}
\eeq
Finally, the Gronwall inequality applied to \eqref{Gwq} shows that
\beq \bl{Syn}
	\begin{split}
	&\smn (|\G_\ik (t)|^2 + |V_\ik (t)|^2) + \smn (|\Pi_\ik (t)|^2 + |W_\ik (t)|^2)  \\
	\leq & e^{- 2\gd [t - T_{(x^0, \, y^0)}]} \sum_{i=1}^n [ |\G_\ik (T_{(x^0, \, y^0)})|^2 + |\Pi_\ik (T_{(x^0, \, y^0)})|^2 + |V_\ik (T_{(x^0, \, y^0)})|^2 + |W_\ik (T_{(x^0, \, y^0)})|^2]  \\[3pt]
	\leq &\, 4e^{- 2\gd [t - T_{(x^0, \, y^0)}]}\,Q \to 0,  \quad \text{as} \;\, t \to \infty.
	\end{split}
\eeq
Thus it is proved that for all solutions of the problem \eqref{CN}-\eqref{Asp} for this 2D FitzHugh-Nagumo CNN with the boundary feedback, the following convergence holds at a uniform exponential rate,
\beq \bl{gik}
	\begin{split}
	&\lim_{t \to \infty} \smn \left(|(x_\ik (t) - x_{i-1,k}(t) |^2 + |(y_\ik (t) - y_{i-1,k}(t) |^2 \right) = 0;  \\
	&\lim_{t \to \infty} \smn \left(|(x_\ik (t) - x_{i,k-1}(t) |^2 + |(y_\ik (t) - y_{i,k-1}(t)|^2 \right) = 0.
	\end{split}
\eeq
The convergence in \eqref{gik} shows that this 2D FitzHugh-Nagumo CNN with the boundary feedback is row synchronized and column synchronized. Therefore, it is uniformly and exponentially synchronized. The proof is completed.
\end{proof}

\textbf{Conclusions}. We summarize the new contributions in this paper.  

1. We propose a new mathematical model of 2D cellular neural networks, whose cell template is the 2D lattice FitzHugh-Nagumo equations with boundary feedback control  \eqref{CN}-\eqref{bfc2}. It features the synaptic coupling in terms of the 2D discrete Laplacian operator and the more meaningful and implementable boundary feedback instead of the pinning control or space-clamped feedback placed in all the interior cell nodes of the network. The boundary feedback is computationally better than the mean-field feedback structures studied in synchronization of neural networks. 

2. For this new model of cellular neural networks, we tackle the global dynamics of the solutions by the approach of dynamical system analysis instead of algebraic spectral analysis and Lyapunov functional. Through the uniform \emph{a priori} estimates,  we proved the existence of an absorbing set in the state space $H = \ell^2 (\mZ_{mn}, \mR^{2mn}) $, which signifies that the CNN system is dynamically dissipative and paves the way towards proof of the main result on sybchronization. 

3. The main result stated in Theorem \ref{ThM} provides a sufficient condition for the exponential synchronization of the 2D FitzHugh-Nagumo cellular neural networks with boundary feedback. The threshold condition \eqref{SC} on
$$
	 \liminf_{t \to \infty} \left[\sum_{k=1}^n |x_{m,k}(t) - x_{1,k}(t)|^2 + \sum_{i=1}^m |x_{i, n}(t) - x_{i, 1}(t)|^2 \right]
$$ 
is to be satisfied by the boundary gap signals between the pairwise boundary cells of the two-dimensional grid. The threshold in \eqref{SC} with \eqref{Q} is explicitly expressed by the parameter and is adjustable by the feedback coefficient $p$ designed in applications.

4. More importantly, the exponential synchronization result and the new methodology contributed in this paper can be directly generalized (through more tedious steps though) to the 3D and even higher dimensional CNN modeled by the corresponding cell template of lattice FitzHugh-Nagumo equations with boundary feedback or by some other type models such as the lattice Hindmarsh-Rose equations. 

We comment on two related open problems. One is that we can change the orthogonal cellular network intercouplng defined by the neighborhood indices $|\widetilde{i} - i | + |\widetilde{k} - k | = 1$ of the cell $N(i, k)$ shown in \eqref{CN} to a different coupling neighborhood $| \widetilde{i} - i | = |\widetilde{k} - k| = 1$ for the two-dimensional CNN \cite{Chow1, Chua2, S2}. Then the discrete Laplacian operator is to be adapted and we conjecture that the same kind of results can still be achieved through the approach from dissipativity to synchronization. Another challenging problem is whether the synchronization is achievable for the same setting of the 2D FitzHugh-Nagumo CNN with the periodic boundary conditions \eqref{pbc} but the boundary feedback \eqref{bfc1} and \eqref{bfc2} are exclusively restricted to the equations associated with the four corner cells $\{N(i, k): i = 1, m;\, k = 1, n\}$. 

The presented modeling and synchronization of the cellular neural networks with boundary feedback are expected to be useful and effective with potential applications in the front of artificial intelligence.

\bibliographystyle{amsplain}

\end{document}